

\documentclass[11pt]{article} 

\usepackage[utf8]{inputenc} 
\usepackage[T1]{fontenc}
\usepackage[T1]{CJKutf8} 
\usepackage[english]{babel}
\usepackage{amsmath, amsthm, amssymb}
\usepackage{amssymb,amsfonts,textcomp}
\usepackage{supertabular}
\usepackage{hhline}
\usepackage{multicol}
\usepackage{array}
\usepackage{graphicx} 
\usepackage{color}
\usepackage{hyperref}
\usepackage{placeins}
\usepackage{algorithmicx}
\usepackage[ruled]{algorithm}
\usepackage{algpseudocode}
\usepackage{appendix}
\usepackage{footnote}
\usepackage{subfigure}
\usepackage{caption}
\usepackage{color, colortbl}

\theoremstyle{plain}
\newtheorem{theorem}{Theorem}

\newtheorem{notation}{Notation}

\theoremstyle{definition}
\newtheorem{definition}{Definition}
\newtheorem{example}{Example}

\newtheorem{open problem}{Open Problem}

\theoremstyle{remark}
\newtheorem{remark}[theorem]{Remark}
\newtheorem{note}[theorem]{Note}

\definecolor{Gray}{gray}{0.9}
\definecolor{LightCyan}{rgb}{0.88,1,1}
\definecolor{Red}{rgb}{1,0,0}
\definecolor{Orange}{rgb}{1,0.5,0}



\usepackage{geometry} 
\geometry{a4paper} 




\usepackage{fancyhdr} 
\pagestyle{fancy} 
\lhead{}\chead{}\rhead{}
\lfoot{}\cfoot{\thepage}\rfoot{}

\usepackage{sectsty}
\allsectionsfont{\sffamily\mdseries\upshape} 



\title{Some properties of catalog of (3, g) Hamiltonian bipartite graphs: orders, non-existence and infiniteness}
\author{Vivek S. Nittoor\\
\small\tt vivek@nittoor.com\\
\small\tt Independent Consultant \& Researcher \footnote{formerly with the University of Tokyo}}
\date{} 

\begin{document}
\maketitle

\begin{abstract}

The focus of this paper is on discussion of a catalog of a class of $(3, g)$ graphs for even girth $g$. A $(k, g)$ graph is a graph with regular degree $k$ and girth $g$. This catalog is compared with other known lists of $(3, g)$ graphs such as the enumerations of trivalent symmetric graphs and enumerations of trivalent vertex-transitive graphs, to conclude that this catalog has graphs for more orders than these lists. This catalag also specifies a list of orders, rotational symmetry and girth for which the class of $(3, g)$ graphs do not exist. It is also shown that this catalog of graphs extends infintely. 
\end{abstract}

\section{Background}
\label{sec_background}
A catalog of $(3, g)$ graphs for even girth $g$ has been listed in \cite{CatalogPaper}. A $(k, g)$ graph is a graph with regular degree $k$ and girth $g$. This catalog has been listed for the Hamiltonian bipartite class of trivalent graphs. A detailed discussion on the properties of this catalog is in \cite{OverallPaper}. In this paper, we further elaborate on some of these important properties such as orders for which $(3, g)$ graphs exist on our catalog, orders for which non-existence of $(3, g)$ Hamiltonian bipartite graphs do not exist, and infinite families of $(3, g)$ Hamiltonian bipartite graphs.
A literature survey for graph catalogs and enumerations along with the properties of this catalog has been provided in \cite{OverallPaper}. The focus of this paper is as follows.
\begin{enumerate}
\item A discussion on other methods to construct graphs of high girth has been provided in section \ref{sec_other_methods}. A more detailed survey on catalogs of graphs can be found in \cite{OverallPaper}. 
\item A detailed comparion with existing works, i.e., the enumerations of trivalent symmetric graphs and trivalent vertex-transitive graphs in section \ref{sec_dense_comp}.
\item A detailed list of orders, rotational symmetry and girth for which the class of $(3, g)$ graphs do not exist has been provided in section \ref{sec_non_existence_list}.
\item Infiniteness of our catalog of $(3, g)$ graphs has been explained in section \ref{sec_inf_graph}. Our catalog of $(3, g)$ graphs extends for infinite number of orders, and hence this is an important difference from other lists of $(3, g)$ graphs which are finite.
\end{enumerate}

The cage problem refers to finding the smallest $(k, g)$ graph. Proving that a $(k,g)$ graph is a cage involves showing that a smaller $(k,g)$ graph does not exist. The cage problem is currently resolved only for very limited set of degrees $k$ and girth $g$. If a catalog of $(k, g)$ graphs lists orders for which $(k,g)$ graphs do not exist, then it would resolve the order of the $(k, g)$ cage. Table \ref{table_knowntrivalentbounds} shows the bounds for trivalent cages for even values of girth and known cages from Exoo et al. 2011 \cite{Jajcaysurvey}.\\
Further if the catalog lists at least one $(k, g)$ graph for each order for which such graphs exist, then at least one $(k, g)$ cage can be found. However, the difficulty in enumerating the entire set of $(k, g)$ graphs even for $k = 3$ is known to be very high as shown by the following two surveys on enumeration of trivalent graphs, Read 1981 \cite{Read1981} and Brinkmann et al. 2013 \cite{Brinkmann_hist2013}.\\
The $(k, g)$ cage sub-problem for the vertex-transitive graphs is resolved for many more sets of degree $k$ and girth $g$ than the more general $(k, g)$ cage problem, and also enumerations for trivalent vertex-transitive graphs are known to exist, as recently extended by $2012$, Potocnik et al. \cite{VTcen1} \cite{VTcen2}.\\
Hence, there are two important areas of research literature that are relevant for this research. One is related to the cage problem Exoo et al \cite{Jajcaysurvey} 2011, and the other is related to the enumerations of classes of trivalent graphs. Quoting from \cite{OverallPaper}, ``The enumeration of trivalent symmetric graphs began with the Foster census and was expanded by Conder et al. \cite{mconder} up to order $10,000$. In $2012$, Potocnik et al. \cite{VTcen1} \cite{VTcen2} extended the list to trivalent vertex-transitive graphs up to order $1280$. Sophisticated techniques in these developments are based upon the classification of finite simple groups. These lists provide useful data to various areas of graph theory.'' A detailed survey of the literature of enumerations of graphs is provided in \cite{OverallPaper}.\\
The difficulty of finding a $(k, g)$ graph for a particular class determines whether the $(k, g)$ cage sub-problem for that partcular class could be resolved, and also determines whether an enumeration of $(k, g)$ grapsh for that particular class is feasible. Hence, these two areas of research are related to each other. 
\begin{table}
\centering
\caption{Bounds for trivalent cages for even values of girth from \cite{Jajcaysurvey}}
\label{table_knowntrivalentbounds}
\begin{tabular}{lllllll}
\hline\noalign{\smallskip}
Girth $g$ &
$n(3,g)$ &
Number of cages &
Due to \\
\noalign{\smallskip}
\hline
\noalign{\smallskip} 
6 &
14 &
1 &
Heawood\\
8 &
30 &
1 &
 Tutte\\
10 &
70 &
3 &
O' Keefe-Wong\\
12 &
126 &
1 &
Benson\\
14 &
$258\le n(3,14)\le 384$
 &
~
 &
Exoo; (Lower Bound -McKay) \\
16 &
$512\le n(3,16)\le 960$
 &
~
 &
Exoo\\
18 &
$1024\le n(3,18)\le 2560$
 &
~
 &
Exoo\\
20 &
$2048\le n(3,20)\le 5376$
 &
~
 &
Exoo\\
22 &
$4096\le n(3,22)\le 16206$
 &
~
 &
 Biggs-Hoare\\
24 &
$8192\le n(3,24)\le 49608$
 &
~
 &
 Bray-Parker-Rowley\\
26 &
$16384\le n(3,26)\le 109200$
 &
~
 &
 Bray-Parker-Rowley\\
28 &
$32768\le n(3,28)\le 415104$
 &
~
 &
 Bray-Parker-Rowley\\
30 &
$65536\le n(3,30)\le 1143408$
 &
~
 &
 Exoo-Jajcay\\
32 &
$131072\le n(3,32)\le 3650304$
 &
~
 &
 Bray-Parker-Rowley\\
& 
 \\
\hline
\end{tabular}
\end{table}

\begin{table}
\centering
\caption{Known trivalent cages from \cite{Jajcaysurvey}}
\label{table_trivalentCages}
\begin{tabular}{lllllll}
\hline\noalign{\smallskip}
Girth $g$ &
Order $n(3,g)$ &
Number of Cages \\
\noalign{\smallskip}
\hline
\noalign{\smallskip} 
5 & 10 & 1 \\
6 & 14 & 1 \\
7 & 24 & 1 \\
8 & 30 & 1 \\
9 & 58 & 18 \\
10 & 70 & 3 \\
11 & 112 & 1 \\
12 & 126 & 1 \\
&
 \\
\hline
\end{tabular}
\end{table}

We now turn our attention to our catalog of $(3, g)$ graphs. For the convenience of the reader, quoting from \cite{OverallPaper}, ``The important steps in the approach to find graphs of high girth can be described as follows.
\begin{enumerate}
\item The search space for computer search is restricted to Hamiltonian trivalent bipartite class of trivalent graphs.
\item An efficient representation for Hamiltonian trivalent bipartite graphs with a specified level of rotational symmetry.
\item  A range of rotational symmetries are chosen wisely for each value of $g$ such that a $(3, g)$ HBG with that level of rotational symmetry could be found by computer search.
\item We treat $(3, g)$ graphs of a particular level of rotational symmetry within the identified class of trivalent graphs as a subclass, and seek to list at least one representative from each subclass.''
\end{enumerate}

The efficient representation for Hamiltonian trivalent bipartite graphs with a specified level of rotational symmetry, referred above is the D3 chord index notation from \cite{OverallPaper} which is explained at the end of this section.\\
Symmetry factor is a parameter for representing rotational symmetry in a Hamiltonian trivalent bipartite graph that has been defined in \cite{OverallPaper}. Quoting from \cite{OverallPaper}, ``Symmetry factor allows the decomposition of the problem of listing $(3, g)$ HBGs to sub-problems of listing $(3, g)$ HBGs for a range of symmetry factors.'' 
\begin{definition}  \textbf{Symmetry factor for Hamiltonian trivalent bipartite graph}  \cite{OverallPaper}\\
\label{def_sym_fac_gen}
\label{def_sym_fac}
A Hamiltonian trivalent bipartite graph with order $2m$ is said to have symmetry factor $b \in \mathbb{N}$ if the following conditions are satisfied.
\begin{enumerate}
\item $b | m$. 
\item There exists a labelling of the vertices of the Hamiltonian trivalent graph with order $2m$ have labels $1, 2, \ldots, 2m$, such that $1 \to 2 \to \ldots \to 2m \to 1$ is a Hamiltonian cycle that satisfy the following properties.
\begin{itemize}
\item The edges that are not part of the above Hamiltonian cycle are connected as follows.
Vertex $i$ is connected to vertex $u_{i}$ for $1 \le i \le 2m$.
\item If $j \equiv i \bmod 2b$ for $1 \le j \le 2b$ and $1 \le i \le 2m$ then the following is true, $u_{i} - i  \equiv u_{j} - j \bmod 2m$.
\end{itemize}
\end{enumerate}
\end{definition}

We quote the following for \nameref{notation_D3} notation from \cite{OverallPaper}.

\subsubsection{D3}
\label{notation_D3}
The notation \nameref{notation_D3} refers to \nameref{notation_D3} chord indices $l_{1}, l_{2}, \ldots, l_{b}$ for a Hamiltonian trivalent bipartite graph with symmetry factor $b$ and $2m$ vertices where $b | m$. The definition of \nameref{notation_D3} chord index notation has been 
has been reproduced here from \cite{OverallPaper}.

\begin{definition} \textbf{D3 chord index notation}   \cite{OverallPaper}\\
\label{notation_trivalent_bg_m}
The \nameref{notation_D3} chord indices $l_{1}, l_{2}, \ldots, l_{m}$ for order $2m$ where each $l_{i}$ is an odd integer satisfying $3 \le l_{i} \le 2m - 3$ for $1\le i\le m$ is a labeled graph with order $2m$, with labels $1, 2, 3, \ldots, 2m$ constructed as follows. 
\begin{enumerate}
\item Vertex $1$ is connected to vertex $2m$,  vertex $2$ and vertex $1 + l_{1}$.
\item For integers $i$ satisfying $2 \le i \le m$, vertex $2i - 1$ is connected to the following three vertices with even labels.
\begin{itemize}
\item Vertex $2i - 1$ is connected to vertex $2i - 2$.
\item Vertex $2i - 1$ is connected to vertex $2i$.
\item Vertex $2i - 1$ is connected to vertex $y_{i}$.
\end{itemize}
where $y_{i}$ is calculated as follows.
\begin{itemize}
\item $y_{i} = 2i - 1 + l_{i}$ if $2i - 1 + l_{i} \le 2m$.
\item $y_{i} = 2i - 1 + l_{i} \bmod 2m$ if $2i - 1 + l_{i} > 2m$.
\end{itemize}
\end{enumerate}
\end{definition}

\FloatBarrier

\section{Related Works}
\label{sec_other_methods}





As discussed in Section \ref{sec_background}, the cage problem is related to our research on catalog of graphs. The cage problem is related to construction of regular graphs of high girth. A related question is construction of graphs of regular degree with high girth. Many high girth graph individual construction techniques have been listed in Exoo et al \cite{Jajcaysurvey} 2011. As mentioned in Section \ref{sec_background}, a detailed survey of enumerations and lists of graphs has been provided in \cite{OverallPaper}, and hence the focus in this section is on graph constructions from the literature. Most of these authors and works have been referred to in \cite{Jajcaysurvey} as well. 


I now discuss some of the important constructions for trivalent graphs from the literature.
\paragraph{Voltage graph} \hspace{0pt} \\
The voltage graph construction method has been used by Exoo in \cite{Exoo} for the $(3, 14)$ record graph with order $384$, Exoo in \cite{113} for $(3, 16)$ record graph with order $960$,
Exoo in \cite{116} for $(3, 18)$ record graph with order $2560$, Exoo in \cite{114} for $(3, 20)$ record graph with order $5376$, and Exoo and Jajcay in \cite{116} for $(3, 30)$ record graph with order $1143408$.
\paragraph{Cayley graph}  \hspace{0pt} \\
Quoting from \cite{Jajcaysurvey}: "Bray, Parker and Rowley constructed a number of current record holders for degree three by factoring out the $3$-cycles in trivalent Cayley graphs." Several of the current trivalent record holder graphs described in \cite{BrayParkerRowley} that are constructed by this method and are $(3, 24)$ record graph of order $49608$, $(3, 26)$ record graph of order $109200$, $(3, 28)$ record graph of order $415104$, $(3, 32)$ record graph of order $3650304$.
\paragraph{Trivalent symmetric graphs}  \hspace{0pt} \\
The Foster census to enumerate all trivalent symmetric graphs was initiated by Ronald M. Foster in the $1930$s. The Foster census was published as a book, Bouwer et al 1988 \cite{FosterC} with trivalent symmetric graphs until order $512$. The Extended Foster Census until order $768$ was published in Conder et al 2002 \cite{marston}. Conder et al have extended this list to order $2048$ and more recently to order $10000$.  Conder`s list of trivalent symmetric graphs up to order $2048$ is available at the link,  \url{http://www.math.auckland.ac.nz/~conder/symmcubic2048list.txt}. 
The more recent list of trivalent symmetric graphs up to order $10000$ by Conder is available at the link, \url{http://www.math.auckland.ac.nz/~conder/symmcubic10000list.txt}.
\paragraph{Vertex-transitive graphs}  \hspace{0pt} \\
A more recent effort to enumerate trivalent vertex-transitive graphs by Primož Potočnik, Pablo Spiga and Gabriel Verret in 2012 \cite{VTcen1} and \cite{VTcen2}. This is a generalization of the enumeration of trivalent symmetric graphs, and the current method works until order $1280$. 
Quoting from \cite{VTcen1}, 
``Let $\Gamma$ be a cubic $G$-vertex-transitive graph, let $v$ be a vertex of $\Gamma$ and let $m$ be the number of orbits\footnote{Orbits are equivalence classes under the relation, $x \equiv y$ if and only if there exists $h \in H$ with $h.x = y$} of the vertex-stabiliser\footnote{For $x \in X$, the stabilizer subgroup of $x$, is the set of all elements in $H$ that are fixed-points of $x$. $H_{x} = \{h \in H|h.x = x\}$. Given group $H$ acting on a set $X$, and given $h$ in $H$ and $x$ in $X$ with $h.x = h$, then $x$ is a fixed point of $h$.} $G_{v}$ in its action on the neighbourhood ($v$). It is an easy observation that, since $\Gamma$ is $G$-vertex-transitive, $m$ is equal to the number of orbits of $G$ in its action on the arcs of $\Gamma$ (and, in particular, does not depend on the choice of $v$). 
Since $\Gamma$ is cubic, it follows that $m = {1, 2, 3}$ and there is a natural split into three cases, according to the value of $m$.''\\ 
The case $m = 1$ corresponds to that of trivalent symmetric graphs for which $|G| \le 48|V(\Gamma)|$, based on a celebrated theorem from Tutte 1947 \cite{42}, Tutte 1959 \cite{Tutte_sym}, that says that the vertex-stablizer has order of at most 48.
Quoting further from \cite{VTcen1}: "Since the order of the groups involved grows at most linearly with the order of the graphs and the groups have a particular structure [12], a computer algebra system can find all the graphs up to a certain order rather efficiently (by using the LowIndexNormalSubgroups algorithm in Magma for example)."\\
For $m = 3$, quoting from \cite{VTcen1}: "If $m = 3$, then $G_{v}$ fixes the neighbours of $v$ pointwise and, by connectedness, it is easily seen that $G_{v} = 1$. This lack of structure of the vertex-stabiliser makes it difficult to use the method that was successfully used in the arc-transitive case. On the other hand, since $G_{v} = 1$, it follows that $|G| = |V(\Gamma)| \le 1280$.'' Thus, $|G| = |V(\Gamma)| \le 1280$ for $m = 3$. Quoting further from \cite{VTcen1}: "This allows us to use the SmallGroups database in Magma to find all possibilities for G (and then for $\Gamma$)"\\ 
For $m = 2$, quoting from \cite{VTcen1}: "Therefore, in order to find all cubic G-vertex-transitive graphs with $m = 2$ up to $n$ vertices, it suffices to construct the list of all tetravalent arc-transitive graphs of order at most $n/2$."
Quoting further from \cite{VTcen1}: "This allows us to use a method similar to the one used in the cubic arc-transitive case to construct a list of all tetravalent arc-transitive graphs of order at most $640$."\\
The information from enumeration of vertex-transitive graphs from \cite{VTcen1} and \cite{VTcen2} is also available at the link, \url{http://www.matapp.unimib.it/~spiga/TableLineByLine.html}.
\paragraph{Ramanujan graphs}  \hspace{0pt} \\
Quoting from Lubotzky, Phillips and Sarnak \cite{LPS_RG}, "Ramanujan graphs $X^{p,q}$ {are } $p+1$ regular Cayley graphs of the group $\mathit{PSL}(2,\mathbb{Z}/q\mathbb{Z})$ if the Legendre symbol $(\frac{p}{q})=1$ and of $\mathit{PGL}(2,\mathbb{Z}/q\mathbb{Z})$ {if the Legendre symbol } $(\frac{p}{q})=-1$. $X^{p,q}$ is bipartite of order $n=\left|(X^{p,q})\right|=q\ast (q^{2}-1)$ and a bound on the girth is given by the equation, $g(X^{p,q})\ge 4\log _{p}(q)-\log _{p}(4)$".
\paragraph{Lazebnik}  \hspace{0pt} \\
For $q$ being a power of a prime $k\ge 3$, Lazebnik in \cite{Lazebnik} describes explicit construction of a $q$-regular bipartite graph on $v=2q^{k}$ vertices with girth $g\ge k+5$.
\paragraph{Chandran}  \hspace{0pt} \\
Chandran in \cite{118} describes a high graph construction method that constructs a graph with girth $\log(n)$ with order $n$.
\paragraph{Research on improvement of lower bound}
The research on improving lower bound for $(k, g)$ consists of proving the non-existence of a $(k, g)$ graph with a given number of vertices. This approach has been used to find the correct values of the lower bound for $n(3, 11)$ and $(4, 7)$ in \cite{112}. 
Extensive computer searches have already been used for improving lower bounds for the cage problem. 
Quoting from \cite{Jajcaysurvey}:
"Such proofs are organized by splitting the problem into a large number of subproblems, which can then be handled independently, and the work can be done in parallel on many different computers. The computation can begin by selecting a root vertex and constructing a rooted $k$-nary tree of radius $\frac{(g - 1)}{2}$. The actual computation proceeds in two phases. First, all non-isomorphic ways to add sets of $m$ edges to the tree are determined (for some experimentally determined value of $m$). This phase involves extensive isomorphism checking. The second phase is the one that is more easily distributed across a large number of computers. Each of the isomorphism classes found in the first phase becomes an independent starting point for an exhaustive search to determine whether the desired graph can be completed. Of all possible edges that could be added to the graph at this point, those that would violate the degree or girth conditions are eliminated. The order in which the remaining edges are considered is then determined by heuristics."
O' Keefe and Wong detailed case analysis: These methods are for lower bound improvement by checking and establishing a $(k, g)$ cage, and different from our approach in terms of focus.
\begin{itemize}
\item $(3, 10)$: 1980, M. O'Keefe, P.K. Wong \cite{31}.
\item $(6, 5)$: 1979, M. O'Keefe, P.K. Wong \cite{KW_6_5}.
\item $(7, 6)$: 1981, M. O'Keefe, P.K. Wong \cite{KW_7_6}.
\item Girth $5$: 1984, M. O'Keefe, P.K. Wong \cite{KW_G_5}.
The following methods are general, but are different from our approach since they are more focussed on improving the lower bound and showing non-existence and then establishing cages.
\begin{itemize}

\item $(3, 9)$: 1995, Brinkmann, \cite{Brinkmann};  $(3, 11)$:  1998, Mckay \cite{105} and $(4, 7)$:   2011 Exoo \cite{112}
\item Largest case for elimination of symmetry assumption $n(3, 9) = 58$ and $n(4, 7) = 67$: Our methods do not work for odd girth, and $(3, 8)$ is the largest case that works for full symmetry factor.
\end{itemize}
\end{itemize}






\FloatBarrier

\section{Existence results}
\label{sec_dense_comp}

Our catalog of $(3, g)$ Hamiltonian bipartite graphs has more orders than lists of $(3, g)$ symmetric graphs and $(3, g)$ vertex-transitive graphs. 
The comparison of our list of $(3, g)$ Hamiltionian bipartite graphs with lists of $(3, g)$ vertex-transitive and $(3, g)$ symmetric graphs is summarized in Table \ref{table_comp1} from \cite{OverallPaper} for even values of girth $g$ between $6$ and $14$. As shown in Table \ref{table_comp2}, our lists are exhaustive for $(3, 6)$ and $(3, 8)$ Hamiltionian bipartite graphs and partial for $(3, 10)$, $(3, 12)$ and $(3, 14)$ Hamiltionian bipartite graphs.

\begin{table}
\caption{Comparison of orders of $(3, g)$ lists from \cite{OverallPaper}}
\label{table_comp1}
\centering
\begin{tabular}{cccccc}
\hline
    $(3, g)$ & Until &Hamiltonian  & Vertex  &  Symmetric   \\
 graphs  & order & bipartite & -transitive &  \\
\noalign{\smallskip}
\hline
\noalign{\smallskip}

$(3, 6)$ & 50 & 19 & 19 & 10   \\
 $(3, 8)$ &90 & 29  & 21  & 6 \\
$(3, 10)$ & 160 & 29 & 15   & 7 \\
$(3, 12)$ & 400& 84 &26  & 16 \\
$(3, 14)$ & 1000& 164 & 35  & 11\\
\hline
\end{tabular}
\end{table}

\begin{enumerate}
\item \textbf{$(3, 6)$ Hamiltonian bipartite graphs until $50$} \\
Our enumeration of distinct orders for which $(3, 6)$ Hamiltonian bipartite graphs exist until $50$ is exhaustive. $(3, 6)$ Hamiltonian bipartite graphs exist for all even orders greater than or equal to $14$. As shown in Table \ref{table_3_6_hbg}, $(3, 6)$ Hamiltonian bipartite graphs exist for $19$ distinct orders until $50$ and in Table \ref{table_3_6_s_vt}, $(3, 6)$ vertex-transitive graphs exist for $19$ orders until $50$ and $(3, 6)$ symmetric graphs exist for $19$ orders until $50$.

\item \textbf{$(3, 8)$ Hamiltonian bipartite graphs until $90$} \\
Our enumeration of distinct orders for which $(3, 8)$ Hamiltonian bipartite graphs exist until $90$ is exhaustive, since $(3, 8)$ Hamiltonian bipartite graphs have been found to exist for all distinct orders between $30$ and $90$, with the exception of $32$, for which it is shown that a $(3, 8)$ Hamiltonian bipartite graph cannot exist in Table \ref{table_non_existence}. As shown in Table \ref{table_3_8_hbg}, $(3, 8)$ Hamiltonian bipartite graphs exist for $28$ orders until $90$ and in Table \ref{table_3_8_s_vt}, $(3, 8)$ vertex-transitive graphs exist for $21$ orders until $90$ and $(3, 8)$ symmetric graphs exist for $6$ orders until $90$. It is observed in Table \ref{table_3_8_hbg} and Table \ref{table_3_8_s_vt} that for each $(3, 8)$ vertex-transitive graph on the vertex-transitive list, there exists a $(3, 8)$ Hamiltonian bipartite graph on our list until order $90$.

\begin{remark}
$\exists$ $(3, 8)$ HBG for even orders satisfying $[34, 90] \cup \{30\}$.
\end{remark}

\item \textbf{$(3, 10)$ Hamiltonian bipartite graphs until $160$} \\
Our enumeration of distinct orders for which $(3, 10)$ Hamiltonian bipartite graphs exist until $160$ is partial, since our conclusion on existence of $(3, 10)$ Hamiltonian bipartite graphs for some orders is inconclusive. As shown in Table \ref{table_3_10_hbg}, $(3, 10)$ Hamiltonian bipartite graphs exist for $29$ orders until $160$ and in Table \ref{table_3_10_s_vt}, $(3, 10)$ vertex-transitive graphs exist for $15$ orders until $160$ and $(3, 10)$ symmetric graphs exist for $7$ orders until $160$. It is observed in Table \ref{table_3_10_hbg} and Table \ref{table_3_10_s_vt} that for each $(3, 10)$ vertex-transitive graph on the vertex-transitive list, there exists a $(3, 10)$ Hamiltonian bipartite graph on our list until order $160$. It is observed in Table \ref{table_3_10_hbg} and Table \ref{table_3_10_s_vt} that for each $(3, 10)$ vertex-transitive graph on the vertex-transitive list, there exists a $(3, 10)$ Hamiltonian bipartite graph on our list until order $160$.

\item \textbf{$(3, 12)$ Hamiltonian bipartite graphs until $400$} \\
Our enumeration of distinct orders for which $(3, 12)$ Hamiltonian bipartite graphs exist until $400$ is partial, since our conclusion on existence of $(3, 12)$ Hamiltonian bipartite graphs for some orders is inconclusive. As shown in Table \ref{table_3_12_hbg}, $(3, 12)$ Hamiltonian bipartite graphs exist for $84$ orders until $400$ and in Table \ref{table_3_12_s_vt}, $(3, 12)$ vertex-transitive graphs exist for $26$ orders until $400$ and $(3, 12)$ symmetric graphs exist for $16$ orders until $400$. It is observed in Table \ref{table_3_12_hbg} and Table \ref{table_3_12_s_vt} that for each $(3, 12)$ vertex-transitive graph on the vertex-transitive list, there exists a $(3, 12)$ Hamiltonian bipartite graph on our list until order $400$. It is observed in Table \ref{table_3_12_hbg} and Table \ref{table_3_12_s_vt} that for each $(3, 12)$ vertex-transitive graph on the vertex-transitive list, there exists a $(3, 12)$ Hamiltonian bipartite graph on our list until order $400$. It is observed in Table \ref{table_3_12_hbg} and Table \ref{table_3_12_s_vt} that for each $(3, 12)$ vertex-transitive graph on the vertex-transitive list, there exists a $(3, 12)$ Hamiltonian bipartite graph on our list until order $400$.

\item \textbf{$(3, 14)$ Hamiltonian bipartite graphs until $1000$} \\
Our enumeration of distinct orders for which $(3, 14)$ Hamiltonian bipartite graphs exist until $1000$ is partial, since our conclusion on existence of $(3, 14)$ Hamiltonian bipartite graphs for some orders is inconclusive. As shown in Table \ref{table_3_14_hbg}, $(3, 14)$ Hamiltonian bipartite graphs exist for $84$ orders until $400$ and in Table \ref{table_3_14_s_vt}, $(3, 14)$ vertex-transitive graphs exist for $35$ orders until $1000$ and $(3, 14)$ symmetric graphs exist for $11$ orders until $400$. It is observed in Table \ref{table_3_14_hbg} and Table \ref{table_3_14_s_vt} that for each $(3, 14)$ vertex-transitive graph on the vertex-transitive list, there exists a $(3, 14)$ Hamiltonian bipartite graph on our list until order $1000$. 

\end{enumerate}

\begin{note} \textbf{Observation}
$(3, g)$ Hamiltonian bipartite graphs exist for each distinct order for which $(3, g)$ vertex-transitive graphs exist for considered ranges, whether bipartite or non-bipartite exist for even girth until $g =14$.
\end{note}

Quoting from \cite{OverallPaper} on outcomes of listing of $(3, g)$ Hamiltonian bipartite graphs as follows.
\begin{itemize}
\item \textit{Exhaustive}: Outcome of listing of $(3, g)$ Hamiltonian bipartite graphs is exhaustive to the extent that all orders in specified range that have a $(3, g)$ Hamiltonian bipartite graph are listed, with proof for non-existence for orders not listed.
\item \textit{Partial}: Outcome of listing of $(3, g)$ Hamiltonian bipartite graphs is partial if results on existence $(3, g)$ Hamiltonian bipartite graph for some orders in specified range are inconclusive.
\end{itemize}

\begin{table}
\caption{Catalog of $(3, g)$ Hamiltonian bipartite graphs}
\label{table_comp2}
\centering
\begin{tabular}{cllllll}
\hline
    $(3, g)$ & Until & Coverage  & Table & Upper bound \\
\noalign{\smallskip}
\hline
\noalign{\smallskip}

$(3, 6)$ & 50 &\textit{Exhaustive}   \cellcolor{Gray}   & Table \ref{table_3_6_hbg}  &$(3, 6)$ cage included  \\
 $(3, 8)$ & 90 & \textit{Exhaustive}   \cellcolor{Gray}  & Table \ref{table_3_8_hbg} & $(3, 8)$ cage included \\
$(3, 10)$ & 160 &\textit{Partial}
   \cellcolor{LightCyan}   & Table \ref{table_3_10_hbg}  &  $(3, 10)$ cage included \\
$(3, 12)$ & 400 &\textit{Partial} \cellcolor{LightCyan} &  Table \ref{table_3_12_hbg} & $(3, 12)$ cage included \\
$(3, 14)$ & 1000 &\textit{Partial}  \cellcolor{LightCyan}  & Table \ref{table_3_14_hbg} &  $(3, 14)$ record graph included  \\
\hline
\end{tabular}
\end{table}


\begin{table}
\caption{$(3, 6)$ lists}
\label{table_3_6_hbg} 
\centering
\begin{tabular}{cllllll}
\hline\noalign{\smallskip}
Order of $(3, 6)$ Hamiltonian bipartite graph \\  
\noalign{\smallskip}
\hline
\noalign{\smallskip} 

14, 16, 18, 20, 22, 24, 26, 28, 30, 32, 34, 36, 38, 40, 42, 44, 46, 48, 50\\ 
\hline
\end{tabular}
\end{table}

\begin{table}
\caption{Other $(3, 6)$ lists}
\label{table_3_6_s_vt} 
\centering
\begin{tabular}{cllllll}
\hline\noalign{\smallskip}
Class of $(3, 6)$ graph & Orders for specified class of $(3, 6)$ graph \\  
\noalign{\smallskip}
\hline
\noalign{\smallskip} 

 \textbf{$(3, 6)$ symmetric} & 14, 16, 18, 20, 24, 26, 32, 38, 42, 50  \\
 \textbf{$(3, 6)$ vertex-transitive} & 14, 16, 18, 20, 22, 24, 26, 28, 30, 32, 34, 36, 38, 40, 42, 44, 46, 48, 50\\
\hline
\end{tabular}
\end{table}



\FloatBarrier




\begin{table}
\centering
\begin{tabular}{ccc}
\hline
   & \\
\noalign{\smallskip}
\hline
\noalign{\smallskip}
\label{table_comp1}
\cellcolor[rgb]{1, 0.5, 0} & \cellcolor[rgb]{1, 0.5, 0} & Trivalent symmetric\\ 
\cellcolor[rgb]{1, 0.5, 0} & \cellcolor[rgb]{1, 0.5, 0}  &  and also on our $(3, g)$ HBG catalog \\
\hline
\cellcolor[rgb]{0, 0, 1} &\cellcolor[rgb]{0, 0, 1} &Trivalent vertex-transitive\\ 
\cellcolor[rgb]{0, 0, 1} &\cellcolor[rgb]{0, 0, 1}  & and also on our $(3, g)$ HBG catalog \\
\hline
\cellcolor[rgb]{1, 0, 0} & \cellcolor[rgb]{1, 0, 0} & Trivalent vertex-transitive\\ 
\cellcolor[rgb]{1, 0, 0} & \cellcolor[rgb]{1, 0, 0}  &  \textbf{but not on} our $(3, g)$ HBG catalog \\
\hline
\cellcolor{black} & \cellcolor{black}  & On our $(3, g)$ HBG catalog\\ 
\cellcolor{black} & \cellcolor{black}  & \textbf{but not on} trivalent vertex-transitive list \\
\hline
\cellcolor{Gray} & \cellcolor{Gray} & On our $(3, g)$ HBG catalog\\ 
\cellcolor{Gray} & \cellcolor{Gray} &  \\
\hline
\end{tabular}
\end{table}

\begin{table}
\centering
\caption{$(3, 8)$ until order 90} 
\begin{tabular}{ccccc}
\cellcolor[rgb]{1, 0.5, 0} & \cellcolor[rgb]{0, 0, 1} & \cellcolor{black} & \cellcolor[rgb]{1, 0, 0} &  \cellcolor{Gray}\\
 6 & 21 & 8 & 0 & 29\\
\hline
\end{tabular}
\begin{tabular}{c}
\cellcolor{Gray}
\textcolor[rgb]{1, 0.5, 0}{30}, {34, 36, 38}, \textcolor[rgb]{1, 0.5, 0}{40}, \textcolor[rgb]{0, 0, 1}{42}, {44}, \textcolor[rgb]{0, 0, 1}{48, 50, 52, 54}, \textcolor[rgb]{1, 0.5, 0}{56}, \textcolor[rgb]{0, 0, 1}{58, 60}, {62}, \\
\cellcolor{Gray}
 \textcolor[rgb]{1, 0.5, 0}{64}, \textcolor[rgb]{0, 0, 1}{66, 68, 70, 72, 74}, {76}, \textcolor[rgb]{0, 0, 1}{78, 80, 82, 84}, {86, 88}, \textcolor[rgb]{0, 0, 1}{90}\\ 
\end{tabular}
\end{table}

\begin{table}
\centering
\caption{$(3, 10)$ until order 160}  
\begin{tabular}{ccccc}
\cellcolor[rgb]{1, 0.5, 0} & \cellcolor[rgb]{0, 0, 1} & \cellcolor{black} & \cellcolor[rgb]{1, 0, 0} &  \cellcolor{Gray}\\
 7 & 15 & 14 & 0 & 29\\
\hline
\end{tabular}
\begin{tabular}{c}
\cellcolor{Gray}
{70, 72, 78}, \textcolor[rgb]{1, 0.5, 0}{80}, {84, 88}, \textcolor[rgb]{1, 0.5, 0}{90, 96}, {98}, \textcolor[rgb]{0, 0, 1}{100}, {104, 108}, \textcolor[rgb]{1, 0.5, 0}{110, 112, 120}, \\
 \cellcolor{Gray}
 \textcolor[rgb]{0, 0, 1}{126, 128}, {130}, \textcolor[rgb]{0, 0, 1}{132}, 136, \textcolor[rgb]{0, 0, 1}{140}, \textcolor[rgb]{1, 0.5, 0}{144}, {150, 152, 154}, \textcolor[rgb]{0, 0, 1}{156, 160}\\
\end{tabular}
\end{table}

\begin{table}
\centering
\caption{$(3, 12)$ until order 400} 
\begin{tabular}{ccccc}
\cellcolor[rgb]{1, 0.5, 0} & \cellcolor[rgb]{0, 0, 1} & \cellcolor{black} & \cellcolor[rgb]{1, 0, 0} &  \cellcolor{Gray}\\
 16 & 26 & 58 & 0 & 84\\
\hline
\end{tabular}
\begin{tabular}{c}
\cellcolor{Gray}
{126},  \textcolor[rgb]{1, 0.5, 0}{162, 168}, {180},  \textcolor[rgb]{1, 0.5, 0}{182}, {186, 190},  \textcolor[rgb]{1, 0.5, 0}{192}, {196, 198, 200},  \textcolor[rgb]{1, 0.5, 0}{204}, \\
\cellcolor{Gray}
{208, 210},  \textcolor[rgb]{1, 0.5, 0}{216}, {220, 222},  \textcolor[rgb]{1, 0.5, 0}{224}, {228, 230, 232},  \textcolor[rgb]{1, 0.5, 0}{234}, {238},  \textcolor[rgb]{1, 0.5, 0}{240}, \\
\cellcolor{Gray}
\textcolor[rgb]{0, 0, 1}{248, 250}, {252},  \textcolor[rgb]{0, 0, 1}{256}, {260},  \textcolor[rgb]{0, 0, 1}{264}, {266},  \textcolor[rgb]{0, 0, 1}{270, 272}, {276, 280, 282}, \\
\cellcolor{Gray}
 \textcolor[rgb]{0, 0, 1}{288}, {290, 294, 300, 304, 306, 308, 310},  \textcolor[rgb]{0, 0, 1}{312}, {318},  \textcolor[rgb]{0, 0, 1}{320}, {322},  \\
 \cellcolor{Gray}
 \textcolor[rgb]{0, 0, 1}{324}, {328},  \textcolor[rgb]{0, 0, 1}{330, 336}, {340},  \textcolor[rgb]{0, 0, 1}{342}, {344, 348, 350, 352, 354},  \textcolor[rgb]{0, 0, 1}{360}, \\
 
\cellcolor{Gray}
 \textcolor[rgb]{0, 0, 1}{364}, {366, 368, 370, 372, 374, 376},  \textcolor[rgb]{0, 0, 1}{378},{380},  \textcolor[rgb]{1, 0.5, 0}{384}, \textcolor[rgb]{0, 0, 1}{390}, {392},\\ 
 
 \cellcolor{Gray}
{396, 400} \\ 
\end{tabular}
\end{table}

\begin{table}
\centering
\caption{$(3, 14)$ until order 1000} 
\begin{tabular}{ccccc}
\cellcolor[rgb]{1, 0.5, 0} & \cellcolor[rgb]{0, 0, 1} & \cellcolor{black} & \cellcolor[rgb]{1, 0, 0} &  \cellcolor{Gray}\\
 11 & 35 & 129 & 1\footnote{\textcolor[rgb]{1, 0, 0}{486}} & 164\\
\hline
\end{tabular}
\begin{tabular}{c}
\cellcolor{Gray}
{{384\footnote{$(3, 14)$ record}}, \textcolor[rgb]{0, 0, 1}{406}, {440}, \textcolor[rgb]{1, 0.5, 0}{448}, {456, 460, 462, 464, 468, 472}, {476}, \textcolor[rgb]{0, 0, 1}{480}, 488, }\\

\cellcolor{Gray} 
{{490, 492, 496, 500}, \textcolor[rgb]{1, 0.5, 0}{504, 506}, {510}, \textcolor[rgb]{0, 0, 1}{512}, 516, 518, 520, 522, 528, }\\

\cellcolor{Gray}  
{{530, 532, 536, 540}, \textcolor[rgb]{0, 0, 1}{544}, 546, 550, 552, 558, 560, 564, 568, 570, } \\

\cellcolor{Gray}
{{572, 574}, \textcolor[rgb]{0, 0, 1}{576}, {580, 584, 588, 590, 592, 594}, \textcolor[rgb]{0, 0, 1}{600, 602}, 608, 610,} \\

\cellcolor{Gray}
{{612, 616, 620}, \textcolor[rgb]{1, 0.5, 0}{624}, {630, 632, 636, 638},} {\textcolor[rgb]{0, 0, 1}{640}, {644}, \textcolor[rgb]{0, 0, 1}{648}, {650, 656},}\\

\cellcolor{Gray}
{658, \textcolor[rgb]{0, 0, 1}{660}, {664, 666, 670}, \textcolor[rgb]{1, 0.5, 0}{672}, {680, 682, 684, 686, 688, 690, 696,}} \\

\cellcolor{Gray}
{700, 702, 704, 708, 710, 712}, {\textcolor[rgb]{0, 0, 1}{720}, {728, 730, 732, 736, 738, 740},} \\

\cellcolor{Gray}
744, \textcolor[rgb]{0, 0, 1}{750}, {752, 756, 760}, \textcolor[rgb]{1, 0.5, 0}{768}, {770, 774, 776, 780}, \textcolor[rgb]{0, 0, 1}{784}, {790, 792}, \\

\cellcolor{Gray}
798, \textcolor[rgb]{0, 0, 1}{800}, {808, 810}, {812}, \textcolor[rgb]{1, 0.5, 0}{816}, \textcolor[rgb]{0, 0, 1}{820}, {824, 826, 828, 830, 832}, \textcolor[rgb]{0, 0, 1}{840},\\

\cellcolor{Gray} 
{846, 848, 850, 854, 856, 860, 864, 868, 870, 872, 876}, \textcolor[rgb]{1, 0.5, 0}{880},  {882}, \\

\cellcolor{Gray}
{884, 888, 890}, \textcolor[rgb]{1, 0.5, 0}{896},  \textcolor[rgb]{0, 0, 1}{900},  {904, 910}, \textcolor[rgb]{1, 0.5, 0}{912}, {918, 920, 924, 928},  \textcolor[rgb]{0, 0, 1}{930},\\

\cellcolor{Gray}  
 {936, 938, 940, 944, 948, 950, 952, 954}, \textcolor[rgb]{1, 0.5, 0}{960}, {962, 968, 970}, \textcolor[rgb]{0, 0, 1}{972},\\

\cellcolor{Gray}
{976, 980}, \textcolor[rgb]{0, 0, 1}{984}, {988}, \textcolor[rgb]{0, 0, 1}{990}, \textcolor[rgb]{0, 0, 1}{992}, \textcolor[rgb]{0, 0, 1}{994}, {996}, \textcolor[rgb]{0, 0, 1}{1000}

\end{tabular}
\end{table}

\FloatBarrier

\section{Non-existence Lists}
\label{sec_non_existence_list}

The cases for which a conclusive result has been reached for non-existence of a graph with a specified symmetry factor and girth are referred to as "Non-existence List".

\begin{enumerate}
\item There does not exist a $(3, 14)$ Hamiltonian bipartite graph with symmetry factors $4, 5, 6$ between orders $258$ and $384$.
\item Non-existence of orders of $(3, 6)$ and $(3, 8)$ Hamiltonian bipartite graphs are given in Table \ref{table_non_existence} for full symmetry factors. 
\item Non-existence of orders of $(3, 8)$ Hamiltonian bipartite graphs for various symmetry factors, are given in Table \ref{table_non_existence1}.
\item Non-existence of orders of $(3, 10)$ Hamiltonian bipartite graphs for various symmetry factors, are given in Table \ref{table_non_existence2}.
\item Non-existence of orders of $(3, 12)$ Hamiltonian bipartite graphs for various symmetry factors, are given in Table \ref{table_non_existence3}. 
\item Non-existence of orders of $(3, 14)$ Hamiltonian bipartite graphs for various symmetry factors, are given in Table \ref{table_non_existence4}.
\item Non-existence of orders of $(3, 16)$ Hamiltonian bipartite graphs for various symmetry factors, are given in Table \ref{table_non_existence5}.
\item Non-existence of orders of $(3, 18)$ Hamiltonian bipartite graphs for various symmetry factors, are given in Table \ref{table_non_existence6}.
\end{enumerate}

\FloatBarrier
\begin{table}
\caption{Non-existence of $(3, g)$ Hamiltonian bipartite graphs for the following number of even vertices}
\centering
\begin{tabular}{clclllll}
\hline\noalign{\smallskip}
$(3, g)$ & Orders for non-existence of $(3, g)$ Hamiltonian bipartite graph\\  
\noalign{\smallskip}
\hline
\noalign{\smallskip} 
\label{table_non_existence} 
(3, 6) & 10, 12 \\  
(3, 8) & 20, 22, 24, 26, 28, 32 \\  
\hline
\end{tabular}
\end{table}


\begin{table}
\centering
\caption{Non-existence of $(3, 8)$ Hamiltonian bipartite graphs for various symmetry factors for the following orders}
\label{table_non_existence1} 
\begin{tabular}{cclclllll}
\hline\noalign{\smallskip}
Symmetry factor $b$ & Orders for non-existence of $(3, 8)$ \\ 
 & Hamiltonian bipartite graph\\
 & with symmetry factor $b$ \\
\noalign{\smallskip}
\hline
\noalign{\smallskip} 

 3 & 36 \\  
 4 & 32\\  
 5 & 30  \\  
 6 & 24 \\  
 7 & 28 \\  
 8 & 32 \\  
 9 & 36 \\  
 $10$ &  20 \\  
\hline
\end{tabular}
\end{table}

\begin{table}
\centering
\caption{Non-existence of $(3, 10)$ Hamiltonian bipartite graphs for various symmetry factors for the following orders}
\label{table_non_existence2} 
\begin{tabular}{clclllll}
\hline\noalign{\smallskip}
Symmetry factor $b$ & Orders for non-existence of $(3, 10)$ \\ 
 & Hamiltonian bipartite graph\\
 & with symmetry factor $b$ \\
\noalign{\smallskip}
\hline
\noalign{\smallskip} 

 $4$ & 64 \\  
 $5$ & 50, 60, 70 \\  
 $6$ &  60, 72 \\  
 $7$ & 56, 84 \\  
 $8$ & 64 \\  
 $9$ & 54, 72 \\   
 $10$ & 60 \\  
 $12$ & 24, 48 \\   
\hline
\end{tabular}
\end{table}

\begin{table}
\centering
\caption{Non-existence of $(3, 12)$ Hamiltonian bipartite graphs for various symmetry factors for the following orders}
\label{table_non_existence3} 
\begin{tabular}{clclllll}
\hline\noalign{\smallskip}
Symmetry factor $b$ & Orders for non-existence of $(3, 12)$ Hamiltonian bipartite graph\\
 & with symmetry factor $b$ \\
\noalign{\smallskip}
\hline
\noalign{\smallskip} 
 $2$ & 60 -- 512, in steps of 4  \\   
 $3$ & 132, 138, 144, 150, 156, 168, 174  \\   
 $4$ & 56 -- 208, in steps of 8  \\   
 $5$ & 60 -- 180, in steps of 10, 200  \\   
 $6$ & 60, 72, 84, 96, 108, 120, 132, 144, 156  \\   
 $7$ & 140, 154, 168   \\   
 $8$ & 128  \\   
$9$ & 144  \\ 
\hline
\end{tabular}
\end{table}

\begin{table}
\centering
\caption{Non-existence of $(3, 14)$ Hamiltonian bipartite graphs for various symmetry factors for the following orders from \cite{3_14Paper}}
\label{table_non_existence4} 
\begin{tabular}{clclllll}
\hline\noalign{\smallskip}
Symmetry factor $b$ & Orders for non-existence of $(3, 14)$ Hamiltonian bipartite graph\\
 & with symmetry factor $b$ \\
\noalign{\smallskip}
\hline
\noalign{\smallskip} 

$4$ & 272, 280, 288, 296, 304, 312, 320, 328, 336, 344, 352, 360, 368, 376, 384,   \\  
 & 392, 400, 408, 416, 424, 432, 456 \\  
$5$ & 260, 270, 280, 290, 300, 310, 320, 330, 340, 350, 360, 370, 380, 390, 400, \\   
 & 410, 420, 430, 440, 450, 470, 480 \\   
$6$ & 264, 276, 288, 300, 312, 324, 336, 348, 360, 372, 384\\ & 396, 408, 420, 432, 444  \\  
$7$ &266, 280, 294, 308, 322, 336, 350 \\  
$8$ & 272 \\  
$9$ &  270 \\
$19$ & 380 \\
\hline
\end{tabular}
\end{table}

\begin{table}
\centering
\caption{Non-existence of $(3, 16)$ Hamiltonian bipartite graphs for various symmetry factors for the following orders}
\label{table_non_existence5} 
\begin{tabular}{lllll}
\hline\noalign{\smallskip}
Symmetry factor $b$ & Orders for non-existence of $(3, 16)$ Hamiltonian bipartite graph\\
 & with symmetry factor $b$ \\
\noalign{\smallskip}
\hline
\noalign{\smallskip} 
$5$ & 950 \\   
\hline
\end{tabular}
\end{table}

\begin{table}
\centering
\caption{Non-existence of $(3, 18)$ Hamiltonian bipartite graphs for various symmetry factors for the following orders}
\label{table_non_existence6} 
\begin{tabular}{lllll}
\hline\noalign{\smallskip}
Symmetry factor & $(3, 18)$ Non-existence \\  
\noalign{\smallskip}
\hline
\noalign{\smallskip} 

$4$ & 1920 \\   
\hline
\end{tabular}
\end{table}

\FloatBarrier

\section{Infinite family of graphs}
\label{sec_inf_graph}
The \nameref{notation_D3} chord index notation can specify an infinite family of graphs.
We quote two examples from \cite{OverallPaper} \\
\begin{example} \cite{OverallPaper} \\
\nameref{notation_D3} chord index 5 leads to a $(3, 6)$ HBG for all even orders greater than or equal to $14$.
\end{example}
\begin{example} \cite{OverallPaper} \\
It is practically observed that \nameref{notation_D3} chord indices \\$15\ 53\ 73\ 139\ 243\ 267\ 471\ 651$ leads to $(3, 16)$ HBGs for orders $2352 + 16i$ for integers $i \ge 0$, for symmetry factor $8$. In addition, it is also observed that the above mentioned \nameref{notation_D3} chord indices also lead to $(3, 16)$ HBGs for the following orders \\$1824\ 1840\ 1936\ 2016\ 2032\ 2112\ 2144\ 2160\ 2176\ 2240\ 2256\ 2272$\\ $2288\, 2304\, 2320$.
\end{example}

\begin{notation} 
\textbf{c(x)} \\
Given any node with label $x$ where $1 \le x \le 2m$, we denote the node to which the chord connects to node $x$ as $c(x)$.\\
If the HBG has D3 chord indices $d_1. d_2, ...., d_b$, symmetry factor $b$ and order $2m$, then 
if $x$ is odd, then $c(x)$ is defined as follows.\\
$i = x\ \%\ 2b$ \\
if($i = 0$) then $i = 2b$ \\
$c(x) = (h + d_{i})\ \%\ 2m$ \\
if $c(x) = 0$ then $c(x) = 2m$\\
If $x$ is even, then $c(x)$ is defined as an odd number $y$, where $c(y) = x$.
\end{notation}

\begin{notation} 
\textbf{p(x)} \\
Given any node with label $x$ where $1 \le x \le 2m$, we denote the previous node p(x) as 
$p(x) = (x - 1) \ \%\ 2m$ \\
if $p(x) = 0$ then $p(x) = 2m$
\end{notation}

\begin{notation} 
\textbf{n(x)} \\
Given any node with label $x$ where $1 \le x \le 2m$, we denote the next node n(x) as 
$n(x) = (x + 1) \ \%\ 2m$ \\
if $p(x) = 0$ then $p(x) = 2m$
\end{notation}

\begin{note} 
\label{t_hbg}
\textbf{Traversals on HBGs} \\
Let us examine breadth first traversals for a HBG with D3 chord indices $d_1. d_2, ...., d_b$, symmetry factor $b$ and order $2m$.\\
Without loss of generality we consider breadth first traversals starting at nodes $1, 2, ...., 2b$ (due to rotational symmetry). Let us consider all traversals starting at node $h$ where $1 \le h \le 2b$, as tree with the following structure.\\
Starting from depth $0$, where we have one node with label $h$, we visualize a tree using the following rules.
\begin{itemize}
\item Each node $x$ has successor nodes $c(x)$ if parent node $y$ of $x$ then $x \ne c(y)$, and $p(x)$ if parent node of $x$ is not $n(x)$ and $n(x)$ if parent node of $x$ is not $p(x)$.
\item \textbf{Repeated label criterion} 
If any of the nodes at depth $k$ have the same label as another other node of the same tree at depth $k_1$ where $k_1 \le k$.
\end{itemize}
Since for simple graphs, there exist at most one edge between any two nodes, and no edge should be repeated in any traversal, each breadth first traversal tree for HBG has three nodes at depth $1$, but after that each node has exactly two child nodes, since the immediate parent node is omitted, since the same edge cannot be repeated in this traversal.
\end{note}

\begin{notation} 
\textbf{$T(h, b, m, (d_1, d_2, ...., d_b))$} for a HBG with D3 chord indices $d_1, d_2, ...., d_b$, symmetry factor $b$ and order $2m$ \\
We denote the breadth first traversals for a HBG with D3 chord indices $d_1. d_2, ...., d_b$, symmetry factor $b$ and order $2m$ as described in Note \ref{t_hbg} as $T(h, b, m, (d_1, d_2, ...., d_b))$.
\end{notation}

\begin{notation} Starting depth and terminating depth for a label $a$ in a HBG with D3 chord indices $d_1. d_2, ...., d_b$, symmetry factor $b$ and order $2m$\\
if a label $a$ is found at first at depth $s_a$ and next at depth $t_a$, in a breadth traversal tree starting from node $h$, $T(h, b, m, (d_1, d_2, ...., d_b))$ where $s_a \le t_a$, then we refer to
$s_a$ as starting depth for node $a$ in $T(h, b, m, (d_1, d_2, ...., d_b))$ and 
$t_a$ as starting depth for node $a$ in $T(h, b, m, (d_1, d_2, ...., d_b))$.\\
Lenght of the cycle is clearly $s_a + t_a$.
\end{notation}

Since all cycles for HBGs are of even lenght, any cycle can be found on one of the $2b$ breadth first traversal trees $T(h, b, m, (d_1, d_2, ...., d_b))$ such that $s_a = t_a$.



\begin{theorem}

The length of the cycle terminated at the lowest depth in the $2b$ traversal trees from node $h$ where $1 \le h \le 2b$ is in fact the girth of the HBG.

\end{theorem}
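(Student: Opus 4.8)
The plan is to show that the length $L$ of the cycle terminating at the globally lowest depth $D$ across the $2b$ traversal trees equals the girth $g$, by establishing that $D = g/2$ and that the detection realized at depth $D$ is a genuine simple cycle. Throughout I write $g = 2\ell$, which is legitimate since the graph is bipartite and hence every cycle, in particular a shortest one, has even length.

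First I would record the symmetry reduction. The condition in Definition \ref{def_sym_fac} that $u_i - i \equiv u_j - j \pmod{2m}$ whenever $i \equiv j \pmod{2b}$ says exactly that the chord offset depends only on the residue of the label modulo $2b$; together with the fact that the shift preserves the Hamiltonian cycle, this makes $\sigma \colon i \mapsto i + 2b \pmod{2m}$ a graph automorphism. Consequently the orbit of every vertex under $\langle \sigma \rangle$ meets $\{1,\dots,2b\}$, the tree $T(h,b,m,(d_1,\dots,d_b))$ is isomorphic to the one rooted at $h+2b$, and any cycle can be rotated to one of the same length passing through some $h \in \{1,\dots,2b\}$. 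Hence it suffices to examine the $2b$ trees, and $g$ is the minimum length of a cycle meeting $\{1,\dots,2b\}$.

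The crux is the claim that, at the globally lowest termination depth $D$, every repeated label yields a simple cycle through the root. I would prove this by a lowest-common-ancestor argument combined with minimality. Fix a detection of label $a$ at depths $s \le D$ in some $T(h,\dots)$, giving two root-to-$a$ tree paths $P_1,P_2$ that share a prefix up to their branch point $w$ (at depth $p$) and are thereafter disjoint as tree nodes. Any graph-vertex occurring on two distinct tree nodes is, by the repeated-label criterion of Note \ref{t_hbg}, itself a repeat whose termination depth is the larger of its two depths, hence $\le D$; minimality of $D$ forces that depth to equal $D$, which in turn forces the vertex to be $a$. The same bookkeeping applied inside the shared prefix and inside each branch shows there is no self-intersection before depth $D$. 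Thus $P_1 \cup P_2$ is a simple prefix $h \to w$ followed by two internally disjoint simple arcs $w \to a$ meeting only at $a$, i.e. a simple cycle $B_1 \cup B_2$ based at $w$ of length $(s-p)+(D-p)$. If $p>0$ this cycle, viewed from the representative $w' \in \{1,\dots,2b\}$ equivalent to $w$ under $\sigma$, would terminate the label $a$ at depth $D-p < D$, contradicting minimality; hence $p=0$ and $P_1 \cup P_2$ is a simple cycle through $h$ of length $s+D$.

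With this claim the two inequalities are short. For $D \le \ell$, I would take a shortest cycle, rotate it through some $h \in \{1,\dots,2b\}$, and use bipartiteness: its antipodal vertex $a$ is reached by the two arcs as non-backtracking walks of length $\ell$, so $a$ is a repeated label terminating at depth $\le \ell$, giving $D \le \ell$. For $D \ge \ell$, if $D < \ell$ the claim would yield a simple cycle of length $s+D \le 2D < 2\ell = g$, contradicting the girth; hence $D = \ell$. Finally, at depth $D=\ell$ the claim gives a simple cycle of length $s+\ell$, which is at least $g=2\ell$ by definition of girth and at most $2\ell$ since $s \le \ell$, so its length is exactly $g$. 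I expect the main obstacle to be the simplicity claim itself — ruling out that the two minimal-depth walks re-intersect in the graph, and showing via the lowest-common-ancestor together with the rotational automorphism $\sigma$ that the branch point must be the root.
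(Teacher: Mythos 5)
Your proposal is correct, and it rests on the same underlying mechanism as the paper's proof --- using the $2b$-fold rotational symmetry to re-root a traversal tree at a vertex of a putative shorter cycle and thereby contradict the minimality of the termination depth. However, you organize the argument around a key lemma that the paper does not state or prove: that at the globally minimal termination depth $D$ the two root-to-$a$ tree paths branch at the root itself (via the lowest-common-ancestor argument, re-rooting at the branch point $w$ and its representative $w'$ modulo $2b$), so that the detected repeat really is a \emph{simple} cycle through $h$ of length $s+D$. The paper takes this for granted; without it, the closed walk produced by a repeated label could a priori self-intersect and fail to certify a cycle of that length. Your decomposition into the two inequalities $D\le g/2$ and $D\ge g/2$ is also cleaner and more complete than the paper's Case A / Case B split: as written, the paper's Case B hypothesizes $t_g<t_a$, which already contradicts the assumed minimality of $t_a$, and the case $s_c<s_a$, $t_c\ge t_a$ with $s_c+t_c<s_a+t_a$ is never explicitly covered, whereas your argument handles all shorter cycles uniformly by rooting at a vertex of a shortest cycle and using bipartiteness to place its antipode at depth $g/2$. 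In short, both proofs buy the result from the same symmetry reduction, but yours supplies the simplicity verification and the exhaustive case analysis that the paper's sketch leaves implicit.
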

\begin{proof}
Let us consider $2b$ breadth first traversal trees starting at node $h$, $T(h, b, m, (d_1, d_2, ...., d_b))$ where $1 \le h \le 2b$.\\
Let the cycle $C_a$ at terminated at lowest depth $t_a$ for label $a$ and starting depth $s_a$ in $T(h, b, m, (d_1, d_2, ...., d_b))$ where $1 \le h \le 2b$. Clearly $s_a = t_a$, since the cycle has been terminated at the lowest depth.\\
Case A: Let us consider a cycle $C_c$ terminated at depth $t_c$ for label $c$ and starting depth $s_c$ in $T(h_1, b, m, (d_1, d_2, ...., d_b))$ where $1 \le h_1 \le 2b$ such that $s_c \ge s_a$ and $t_c \ge t_a$. Therefore $s_a + t_a \le s_c + t_c$. Therefore, cycle $C_c$ is not of smaller length than $C_a$.
Case B: 
Let us consider a cycle $C_g$ terminated at depth $t_g$ for label $g$ and starting depth $s_g$ in $T(h_1, b, m, (d_1, d_2, ...., d_b))$ where $1 \le h_1 \le 2b$ such that $s_g > s_a$ and $t_g < t_a$ such that $s_g + t_g < s_a + t_a$.\\
Let $g_1 = g \ \%\ 2b$. If $g_1$ equals $0$ then $g_1 = 2b$. Since clearly, $1 \le g_1 \le 2b$. Consider $T(g_1, b, m, (d_1, d_2, ...., d_b))$ and clearly, label $g_1$ must appear at depth $s_g + t_g$.\\
Since the cycle length is even for bipartite graphs, $s_g + t_g$ is even and if we consider $k = (s_g + t_g)/2$. let us consider $k_1 = k\ \%\ 2b$. If $k_1$ equals $0$, then $k_1 = 2b$.\\
Let us consider breadth first traversal tree $T(k_1, b, m, (d_1, d_2, ...., d_b))$. Clearly this must have repeated labels at depth $k_1$ with label $t$ such that $t\ \%\ 2b = k_1\  \%\ 2b$.\\
This means that this cycle $C_g$ terminates at a depth $k_1$ in $T(k_1, b, m, (d_1, d_2, ...., d_b))$.
Since cycle $C_g$ terminates at a lower depth compared to cycle $C_a$, we have a contradiction since 
Proof by contradiction. QED.
\end{proof}





\begin{theorem}
Given D3 chord indices $d_1, d_2, ...., d_b$ with symmetry factor $b$, there exists a threshold $2bk$, such that HBGs with D3 chord indices $d_1. d_2, ...., d_b$ with symmetry factor $b$ and order $2b (k + i)$ have the same girth, where $i = 0, 1, 2, ...$.
\end{theorem}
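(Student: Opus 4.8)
The plan is to pass from the finite HBG to a single bi-infinite ``unrolled'' graph and to argue that, once the order is large, only cycles that are already present in this infinite graph can be short. Concretely, I would unroll the Hamiltonian cycle onto the integers: let $\Gamma$ be the graph on vertex set $\mathbb{Z}$ in which each $x$ is joined to $x-1$ and $x+1$ (the path edges) and each odd $x$ is joined by a chord to $x + d_i$, where $i \equiv x \pmod{2b}$ (with $0$ read as $2b$). The crucial point is that the chord index attached at $x$ depends only on $x \bmod 2b$, and since every admissible order satisfies $2b \mid 2m$, reducing a label mod $2m$ never changes its residue mod $2b$. Hence $\Gamma$ does not depend on $m$, and the HBG of order $2m$ is exactly the quotient $\Gamma / 2m\mathbb{Z}$ under the translation $x \mapsto x + 2m$; this translation is chord-preserving and fixed-point-free, so the quotient map $\pi \colon \Gamma \to \Gamma/2m\mathbb{Z}$ is a covering.

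Next I would set up the displacement bookkeeping already implicit in the notations $c(x), p(x), n(x)$, but reading each step as an integer rather than modulo $2m$. Put $D = \max\{d_1, \dots, d_b\}$. Every edge of $\Gamma$ --- a path step $\pm 1$ or a chord step $\pm d_i$ --- moves a vertex by at most $D$ in absolute value, so a walk of length $L$ changes the integer label by at most $LD$. Any cycle of length $L$ in the finite HBG lifts, by unique path lifting for the covering $\pi$, to a walk in $\Gamma$ from some $x_0$ to $x_0 + 2m\,w$, where the winding number $w \in \mathbb{Z}$ records how many times it wraps around. This splits cycles into two kinds: contractible ones with $w = 0$, whose lift is a closed walk (in fact a cycle) of $\Gamma$; and wrapping ones with $w \neq 0$, for which $|2m\,w| \le LD$ forces $L \ge 2m/D$.

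I would then observe that $\Gamma$ has finite girth. Each chord $x \to x + d_i$ together with the path segment $x, x+1, \dots, x+d_i$ closes up into a cycle of length $d_i + 1$, so $G := \mathrm{girth}(\Gamma) \le 1 + \min_i d_i < \infty$, and a girth cycle of $\Gamma$ occupies a bounded window of integers, of width at most $GD$. From here the two inequalities follow. For the upper bound, once $2m > GD$ the translation period exceeds this window, so $\pi$ is injective on a shortest cycle of $\Gamma$ and maps it to a genuine $G$-cycle of the HBG; hence the girth is at most $G$. For the lower bound, suppose the HBG of order $2m$ had a cycle of length $L < G$. If it were a wrapping cycle then $L \ge 2m/D > G$ for $2m > GD$, a contradiction; so it is contractible, and its lift is a cycle of $\Gamma$ of length $L < G$, contradicting the minimality of $G$. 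Thus the girth equals $G$ for every admissible order with $2m > GD$.

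Finally I would package this as the stated threshold: choosing $k$ with $2bk > GD$ (for instance $k = \lceil GD/2b \rceil + 1$), every order $2b(k+i)$ with $i \ge 0$ exceeds $GD$ and therefore has girth exactly $G$, independent of $i$. The main obstacle I anticipate is not the counting but making the lift/descent correspondence airtight: verifying that a contractible cycle downstairs lifts to a \emph{simple} cycle upstairs and that a shortest cycle of $\Gamma$ descends with no accidental edge-doubling or vertex identification. Both are consequences of $\pi$ being a covering whose deck group is translation by multiples of $2m$, but spelling this out --- and thereby pinning the explicit threshold $GD$ in terms of the fixed data $d_1, \dots, d_b$ and $b$ --- is where the care is required. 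A secondary point worth checking explicitly is that for small $m$ a chord might coincide with a path edge or with another chord, which is precisely why a threshold (rather than all $m$) is needed.
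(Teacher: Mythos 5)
Your argument is correct, and it takes a genuinely different route from the paper's. The paper works directly with the breadth-first traversal trees $T(h,b,m,(d_1,\dots,d_b))$ for two orders $2m_1$ and $2m_2$ and simply asserts that ``it is clear'' one can establish a one-one onto map between them up to the first repeated label once both orders exceed an (unspecified) threshold; neither the bijection nor the threshold is constructed, and the argument leans on the preceding traversal-tree theorem. You instead build the $2b$-periodic infinite graph $\Gamma$ on $\mathbb{Z}$, realize each admissible HBG as the quotient $\Gamma/2m\mathbb{Z}$ under a free chord-preserving translation, and classify cycles by winding number: wrapping cycles are forced to have length at least $2m/D$, while contractible ones lift to genuine cycles of $\Gamma$, so the girth stabilizes at $\mathrm{girth}(\Gamma)$ once $2m$ exceeds an explicit bound. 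What your approach buys is substantial: an explicit threshold ($2m > GD$ with $G \le 1+\min_i d_i$ and $D=\max_i d_i$), an identification of the limiting girth as the girth of a single concrete infinite graph, and independence from the paper's earlier traversal-tree machinery. The two worries you flag yourself are both easily discharged: a contractible cycle downstairs lifts to a closed walk whose $L$ vertices project injectively and hence are distinct, so the lift is a simple cycle of $\Gamma$; and a shortest cycle of $\Gamma$ sits in a window of width less than $2m$, so it descends injectively. The only residual caution is degenerate small orders where a chord coincides with a Hamiltonian-cycle edge or two chords become parallel, which you correctly note is absorbed by the threshold. In short, your proof is a rigorous replacement for the paper's sketch rather than a transcription of it.
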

\begin{proof}
Let us consider a HBG with order $2m_1$, symmetry factor $b$, with D3 chord indices $d_1. d_2, ...., d_b$. Let us consider the Breadth first traversal trees $T(h, b, m_1, (d_1, d_2, ...., d_b))$ starting at nodes $h$ such that $1 \le h \le 2b$ as a function of order $2m_1$.\\
Similarly, let us consider a HBG with order $2m_2$, symmetry factor $b$, with D3 chord indices $d_1. d_2, ...., d_b$. Let us consider the Breadth first traversal trees $T(h, b, m_2, (d_1, d_2, ...., d_b))$ starting at nodes $h$ such that $1 \le h \le 2b$ as a function of order $2m_2$.\\
It is clear that we can establish a one-one onto map between $T(h, b, m_1, (d_1, d_2, ...., d_b))$ and $T(h, b, m_2, (d_1, d_2, ...., d_b))$, as long as both $2m_1$ and $2m_2$ are beyond the threshold such that that all node labels in tree are distinct, until the first repeated label corresponding to a cycle is found, which is turn means that the girth of the HBGs beyond the threshold is the same.

\end{proof}

In examples \ref{ex_1}, and \ref{ex_2}, we consider HBGs with symmetry factor $1$, D3 chord index $5$ and orders 12, and 14 respectively, and from the catalog, we know the girth for example \ref{ex_1} is $4$ and that for example \ref{ex_2} is $6$.\\
It is also known from the catalog \cite{CatalogPaper} that the more general case of example \ref{ex_3}, HBGs with symmetry factor $1$, D3 chord index $5$ and order $2m$ where $2m \ge 14$ have girth $6$.

\begin{example}
\label{ex_1}
For simplicity, let us consider symmetry factor $1$, D3 chord index $5$, and order 12.\\
Depth 1: 1 has successors 12, 2, and 6 \\
Depth 2: 12 has sucessors 11 and 7\\
2 has sucessors 3 and 9\\
6 has sucessors 7, 5\\ We stop here since 7 is a repeated label.
We now find a cycle $7 \to 6 \to 1 \to 12 \to 7$ which is of length $4$.
\end{example}

\begin{example}
\label{ex_2}
For simplicity, let us consider symmetry factor $1$, D3 chord index $5$, and order 14.\\
Depth 1: 1 has successors 14, 2, and 6 \\
Depth 2: 14 has sucessors 13 and 9\\
2 has sucessors 3 and 11\\
6 has sucessors 7, 5\\
Depth 3: 13 has successors 12 and 4\\
7 has successors 8, 12\\ We stop here since 12 is a repeated label.
We now find a cycle $12 \to 7 \to 6 \to 1 \to 14 \to 13 \to 12$ which is of length $6$.
\end{example}

\begin{example}
\label{ex_3}
 Generalizing the above idea, for a HBG symmetry factor $1$, D3 chord index $l = 5$ with an arbitrary order $2m$ where $2m \ge 14$,
 Depth 1: 1 has successors $2m$, 2, and $l + 1$ \\
Depth 2: $2m$ has sucessors $2m - 1$ and $2m - l$\\
2 has sucessors 3 and $2m - 3$\\
$l + 1$ has sucessors $l + 2$, $l$\\
Depth 3: $2m - 1$ has successors $2m - 2$ and $2m - 1 + l$\\
$l + 2$ has successors $l + 3$, $2 m - 2$\\ We stop here since $2 m - 2$ is a repeated label.
We now find a cycle $2m - 2 \to 2m-7 \to 2m - 8 \to 1 \to 2m \to 2m - 1 \to 2m - 2$ which is of length $6$.
\end{example}



\label{sec_conclusion_graph_analysis}
\label{chap_14_16_conclu}
\label{chap_6_8_10_12_conclu}

\FloatBarrier

\bibliographystyle{unsrt} 
\bibliography{vivek}

\end{document}